\documentclass{gtmon_a}
\pdfoutput=1

\usepackage{pinlabel}

%%% start of metadata

\proceedingstitle{Heegaard splittings of 3--manifolds (Haifa 2005)}
\conferencestart{10 July 2005}
\conferenceend{19 July 2005}
\conferencename{Heegaard splittings of 3--manifolds}
\conferencelocation{Haifa}

\editor{Cameron Gordon}
\givenname{Cameron}
\surname{Gordon}

\editor{Yoav Moriah}
\givenname{Yoav}
\surname{Moriah}

\title[Problems]{Heegaard splittings of 3--manifolds (Haifa 2005)\\Problems}

%  First author
\author{Cameron McA Gordon (Editor)}  
\givenname{Cameron McA}
\surname{Gordon}
\address{University of Texas at Austin\\
Department of Mathematics\\\newline
1 University Station -- C1200\\
Austin, TX 78712-0257\\USA}

\email{gordon@math.utexas.edu}
\urladdr{}

\keyword{Problems}
\keyword{Heegaard splitting}
\keyword{3--manifold}

\subject{primary}{msc2000}{57M27}
\subject{primary}{msc2000}{57M25}
\subject{secondary}{msc2000}{57M07}
%
%  Leave the following items blank
%
\volumenumber{12}
\issuenumber{}
\publicationyear{2007}
\papernumber{16}
\startpage{401}
\endpage{411}
\doi{}
\MR{}
\Zbl{}

\received{22 October 2007}
\revised{}
\accepted{}
\published{3 December 2007}
\publishedonline{3 December 2007}
\proposed{}
\seconded{}
\corresponding{}
%\editor{CPR}
\version{}

%%% End of metadata

\makeatletter
\def\cnewtheorem#1[#2]#3{\newtheorem{#1}{#3}[section]
\expandafter\let\csname c@#1\endcsname\c@thm}

  %  adjust to get page length right

%
%%% Start of user-defined macros %%%
\newtheorem{thm}{Theorem}[section]
\cnewtheorem{conject}[thm]{Conjecture}
\newtheorem{quest}{Question}[section]
\newtheorem{prob}{Problem}[section]
\makeatother  %  move after \newtheorem block
%%%%%%%%%%%%%%
\newcommand{\intersect}{\cap}
\newcommand{\union}{\cup}
\newcommand{\boundary}{\partial}
\newcommand{\CC}{{\cal C}}
%%%%%%%%%%%%%%
\begin{document}

\begin{htmlabstract}
These are problems on Heegaard splittings, that were raised at the
Workshop, listed according to their contributors: David Bachman, Mario
Eudave-Mu&#241;oz, John Hempel, Tao Li, Yair Minsky, Yoav Moriah and
Richard Weidmann.  On pages 285&ndash;298 of this monograph Hyam Rubinstein
gives a personal collection of problems on 3&ndash;manifolds.
\end{htmlabstract}

\begin{abstract}
These are problems on Heegaard splittings, that were raised at the
Workshop, listed according to their contributors: David Bachman, Mario
Eudave-Mu\~noz, John Hempel, Tao Li, Yair Minsky, Yoav Moriah and
Richard Weidmann.  In \cite{Rub} Hyam Rubinstein gives a personal
collection of problems on 3--manifolds.
\end{abstract}

\begin{asciiabstract}
These are problems on Heegaard splittings, that were raised at the
Workshop, listed according to their contributors: David Bachman, Mario
Eudave-Munoz, John Hempel, Tao Li, Yair Minsky, Yoav Moriah and
Richard Weidmann.  On pages 285-298 of this monograph Hyam Rubinstein
gives a personal collection of problems on 3-manifolds.
\end{asciiabstract}
\begin{webabstract}
These are problems on Heegaard splittings, that were raised at the
Workshop, listed according to their contributors: David Bachman, Mario
Eudave-Mu\~noz, John Hempel, Tao Li, Yair Minsky, Yoav Moriah and
Richard Weidmann.  On pages 285--298 of this monograph Hyam Rubinstein
gives a personal collection of problems on 3--manifolds.
\end{webabstract}

\maketitle
%%%%%%%%%%%%%%%%%%%%   Start of main body of article

\section{David Bachman}

We analyze what is known, and what is not known, about the following question: 

\begin{quest}
\label{MainQuestion}
Which manifolds have infinitely many Heegaard splittings of the 
same genus, and how are the splittings constructed?
\end{quest}

The only known examples of 3--manifolds that admit infinitely many Heegaard 
splittings of the same genus are given in Sakuma \cite{sakuma},
Morimoto and Sakuma \cite{ms:91} and Bachman and Derby-Talbot \cite{bdt}.  
By Li's proof of the Waldhausen conjecture \cite{Li}, any such manifold must 
have an essential torus $T$. 
In the presence of such a torus, it is easy to see how an infinite number of 
Heegaard splittings of the same genus might arise. 
Simply take any splitting that intersects $T$, and Dehn twist the splitting 
about $T$. But is this the only way such an infinite collection might arise? 
What can be said about the manifold if this construction does {\it not} 
produce an infinite collection of non-isotopic splittings?

We analyze what is known when the torus is separating. 
(Some of the results described below hold in the non-separating case as well.)
Suppose then $T$ is an essential torus which separates a closed, orientable, 
irreducible 3--manifold $M$ into $X$ and $Y$. Then we may think of $M$ as 
being constructed from $X$ and $Y$ by gluing by some homeomorphism, 
$\phi\co \partial X \to \partial Y$. 

Fix separate triangulations of $X$ and $Y$ (these do not have to agree in 
any way on $T$). By a result of Jaco and Sedgwick \cite{js:03} the sets 
$\Delta _X$ and $\Delta _Y$ of slopes on $\partial X$ and $\partial Y$ that 
bound normal or almost normal surfaces in $X$ and $Y$ are finite 
(see Bachman \cite{bachman:01} for a further discussion of the almost normal case). 
We may thus define the {\it distance  $d(T)$ of $T$} to be the distance 
between the sets $\phi(\Delta _X)$ and $\Delta _Y$, as measured by the path 
metric in the Farey graph of $\partial Y$.  

\begin{thm}[Bachman, Schleimer, Sedgwick \cite{BSS}]
If $d(T)\ge 2$ then every Heegaard splitting of $M$ is an amalgamation of 
Heegaard splittings of $X$ and $Y$. 
\end{thm}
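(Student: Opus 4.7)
The plan is to reformulate ``$\Sigma$ is an amalgamation along $T$'' as ``there is an untelescoped generalized Heegaard splitting of $M$, in the sense of Scharlemann--Thompson, in which $T$ appears as a thin level''; Schultens' amalgamation theorem then packages the two sides back into a Heegaard splitting isotopic to $\Sigma$. So my task is to produce, from $\Sigma$, a generalized splitting whose thick surfaces $\{\Sigma_i\}$ are strongly irreducible, whose thin surfaces $\{F_j\}$ are incompressible and non--$\boundary$-parallel, and in which every $\Sigma_i$ and $F_j$ meets $T$ only in curves essential in both surfaces (or else is disjoint from $T$). Once that is arranged, $T$ can be slipped in as an additional thin level and amalgamation yields the theorem.

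After untelescoping $\Sigma$, the $F_j$ are already incompressible in the irreducible manifold $M$, so standard innermost-disk/outermost-arc surgery puts each $F_j$ into essential position with $T$. The crux is a strongly irreducible thick level $\Sigma_i$. Here I would fix a sweepout of $M$ by $\Sigma_i$ (together with its spines), regard $T$ as a fixed surface, and analyze the resulting Rubinstein--Scharlemann graphic. The dichotomy to establish is that either some level of the sweepout realizes $\Sigma_i$ in essential position with respect to $T$, or else the graphic contains a ``spanning'' region, yielding compressing disks for $\Sigma_i \setminus T$ on opposite sides of $T$.

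To exclude the spanning case I would invoke $d(T) \ge 2$. Such compressing disks have boundary slopes on $T$ at Farey distance at most one. Maximally compressing the pieces $\Sigma_i \intersect X$ and $\Sigma_i \intersect Y$ and then normalizing with respect to the fixed triangulations should yield a normal or almost normal surface in $X$, and one in $Y$, whose boundary slopes lie in $\Delta_X$ and $\phi^{-1}(\Delta_Y)$ respectively and are at Farey distance at most one in $\boundary Y$---contradicting the hypothesis. The main technical obstacle is precisely this normalization step: one must extract genuinely (almost) normal surfaces from the compressing data supplied by the graphic while keeping the boundary slopes in the prescribed sets $\Delta_X$, $\Delta_Y$. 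This is where Jaco--Sedgwick finiteness \cite{js:03} and the almost-normal discussion of \cite{bachman:01} do the real work. Once the spanning case is ruled out, every $\Sigma_i$ can be isotoped into essential position with $T$, $T$ is inserted as a thin level, and re-amalgamation proves that $\Sigma$ is an amalgamation of Heegaard splittings of $X$ and $Y$.
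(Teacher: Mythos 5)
First, a point of orientation: the paper you are working from does not prove this statement at all --- it is quoted from Bachman--Schleimer--Sedgwick \cite{BSS} as background for the problem list --- so the comparison below is with the argument of \cite{BSS} rather than with anything in this text. Your overall architecture (untelescope $\Sigma$ into a Scharlemann--Thompson generalized splitting, control thin and thick levels against $T$, run a sweepout/Rubinstein--Scharlemann analysis on the strongly irreducible thick levels, and convert the bad configurations into normal or almost normal surfaces in $X$ and $Y$ whose boundary slopes violate the Farey-distance hypothesis) is indeed the shape of the argument in \cite{BSS}.

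There is, however, a genuine logical gap where you draw the conclusion. Saying that $\Sigma$ is an amalgamation along $T$ amounts to saying that $T$ appears, up to isotopy, as a thin level of an untelescoping of $\Sigma$; for that you need every thick surface $\Sigma_i$ and every thin surface $F_j$ isotoped \emph{off} $T$, so that $T$ lies inside one of the complementary compression bodies and is therefore parallel to a $\partial_-$--component, i.e.\ to a thin level. A level meeting $T$ in a nonempty collection of curves essential in both surfaces is an \emph{obstruction} to inserting $T$ as a thin level, not a step toward it. So the configuration that must be excluded using $d(T)\ge 2$ is precisely the nonempty essential-intersection case: there $\Sigma_i\cap X$ and $\Sigma_i\cap Y$ (respectively $F_j\cap X$ and $F_j\cap Y$) are incompressible or strongly irreducible surfaces with boundary on $T$ of a common slope $\alpha$, and compressing and normalizing places $\alpha$ in $\Delta_X$ and $\phi(\alpha)$ in, or within distance one of, $\Delta_Y$, forcing $d(T)\le 1$. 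That is exactly the compress-and-normalize argument you wrote down, but you have attached it to the ``spanning'' branch and kept the essential-position branch as the good one; as written, you end by trying to deduce amalgamation from a configuration in which $T$ cannot be a thin level. Relatedly, for the thin levels you only put $F_j$ in minimal position with $T$ and move on; you must also invoke $d(T)\ge 2$, via Haken/Jaco--Sedgwick normalization of the essential pieces $F_j\cap X$ and $F_j\cap Y$, to conclude $F_j\cap T=\emptyset$. Once every thick and thin level is disjoint from $T$, the amalgamation conclusion does follow in the way you indicate.
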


The relevance of this theorem is that Dehn twisting an amalgamation about the 
torus $T$ will not produce non-isotopic Heegaard splittings. Hence, 
when $d(T) \ge 2$ the manifold $M$ can only admit an infinite collection 
of non-isotopic Heegaard splittings if $X$ or $Y$ does. The converse, 
however, is more subtle. That is, if we know $X$ or $Y$ has infinitely 
many non-isotopic Heegaard splittings of the same genus, then does it follow 
that $M$ does as well? We conjecture the following:

\begin{conject}
If $d(T) \ge 3$ and $X$ or $Y$ has infinitely many Heegaard splittings of the 
same genus then $M$ does as well.
\end{conject}

When $d(T)=0$ or $1$, there is the possibility that there is a strongly 
irreducible Heegaard splitting of $M$. Let $H$ denote such a splitting 
surface. By a classic result of Kobayashi, $H$ can be isotoped to meet 
$T$ in a non-empty collection of loops that are essential on both surfaces. 

One way Dehn twisting $H$ about $T$ can fail to produce a non-isotopic 
Heegaard splitting is if $H \cap X$ is a fiber of a fibration of $X$. 
Then the effect of the Dehn twist can be ``undone" by pushing $H \cap X$ 
around the fibration. This is precisely what happens when $T$ is a 
separating vertical torus in a Seifert Fibered space. 
(For a complete resolution of \fullref{MainQuestion} in the case of 
Seifert Fibered spaces, see \cite{bdt}.) 
A second thing that may happen is that $H \cap X$ is the union of two pages 
of an open book decomposition of $X$. Then the effect of the Dehn twist 
can be undone by spinning $H \cup X$ about the open book decomposition. 
We conjecture that these are the only ways that Dehn twisting $H$ about 
$T$ can fail to produce non-isotopic Heegaard splittings:

\begin{conject}
If $M$ admits an essential torus $T$, separating $M$ into $X$ and $Y$, and 
a strongly irreducible Heegaard splitting $H$, then either Dehn twisting 
$H$ about $T$ produces an infinite collection of non-isotopic Heegaard 
splittings, or $H$ can be isotoped so that $H \cap X$ or $H \cap Y$ is 
either a fiber of a fibration of $X$ or $Y$ or two pages of an open book 
decomposition of $X$ or $Y$. 
\end{conject}

%% October 1, 2007 %% 
%%%%%%%%%%%%%%%%%%
\section{Mario Eudave-Mu\~noz}

Let $F$ be a closed surface of genus $1$ standardly embedded in $S^3$,
that is, it bounds a solid torus on each of its sides.  We say that a
knot $K$ has a $(1,b)$--{\it presentation} or that it is in a
$(1,b)$--{\it position}, if $K$ has been isotoped to intersect $F$
transversely in $2b$ points that divide $K$ into $2b$ arcs, so that
the $b$ arcs in each side can be isotoped, keeping the endpoints
fixed, to disjoint arcs on $F$.  The {\it genus--1--bridge number} of
$K$, $b_1(K)$, is the smallest integer $n$ for which $K$ has a
$(1,n)$--presentation.  We say that a knot is a $(1,n)$--{\it knot} if
$b_1(K)\leq n$.  If $K$ is a $(1,1)$--knot, then it is easy to see
that $K$ has tunnel number one. On the other hand, if $K$ has tunnel
number one, it seems to be very difficult to determine $b_1(K)$.  It
has been of interest to find tunnel number one knots with large
genus--1--bridge number, see for example Kobayashi and Rieck
\cite{KR2}.

Moriah and Rubinstein \cite{MR} showed the existence of tunnel number one 
knots $K$ with $b_1(K)\geq 2$. Morimoto, Sakuma and Yokota also showed this, 
and gave explicit examples of 
knots $K$, with tunnel number one and $b_1(K)=2$ \cite{MSY}. 
It was shown by Eudave-Mu\~noz  \cite{E3},  that many of the tunnel  number 
one knots $K$ constructed in \cite{E1}  are not $(1,1)$--knots; this was 
extended in \cite{E4}, where it is shown that many  such knots are 
not $(1,2)$--knots. We remark that such knots can be explicitly described, 
for example the knot $K$ shown in Figure 13 of \cite{E4} satisfies 
$3\leq b_1(K)\leq 4$. Combining results of \cite{E2} and \cite{ER}, it is 
also possible to give explicit examples of knots $K$, with tunnel number 
one and $b_1(K)=2$, for example the knot shown in Figure 4 of \cite{E2}.
Valdez-S\'anchez and Ram\'{\i}rez-Losada have also shown explicit examples of 
tunnel number one knots $K$ with $b_1(K)=2$ (personal communication). 
These knots bound punctured Klein bottles but are not contained in the 
$(1,1)$--knots bounding Klein bottles determined by the same authors \cite{RV1}.

Johnson and Thompson \cite{JT}, and independently Minsky, Moriah and 
Schleimer \cite{MMS} have shown that for any given $n$, there exist tunnel 
number one knots which are not $(1,n)$--knots. The two papers use similar 
techniques, prove the existence of such knots, but do not give explicit 
examples. 
For a tunnel number one knot, let $\Sigma$ be the Heegaard splitting of the 
knot exterior determined by the unknotting tunnel, and let $d(\Sigma)$ 
denote the Hempel distance of the splitting \cite{H}.
In \cite{JT} and \cite{MMS} the existence of tunnel number one 
knots with large Hempel distance is shown, and then results of Scharlemann and 
Tomova \cite{ST}, \cite{T} are used to ensure the knots have large 
genus--1--bridge number. 

We propose the following problems:

\begin{prob} For a given integer $n$, give explicit examples of tunnel 
number one knots $K$ with $b_1(K)\geq n$.
\end{prob}

\begin{prob} For a given integer $N$, give explicit examples of tunnel 
number one knots with $d(\Sigma)\geq N$.
\end{prob}

\begin{prob} For a given integer $n$, give explicit examples of tunnel 
number one knots $K$ with $b_1(K)\geq n$, but with bounded Hempel distance, 
say with $d(\Sigma)= 2$.
\end{prob}
  
%%%%%%%
%% reformat Aug 17, 2007
\section{John Hempel}

\subsection*{Questions on the curve complex}
%%%%%%%%%%%%%%%%%%%%   

Let  $S$ be a surface and $C(S)$ be its  curve complex  whose vertices are 
the isotopy classes of essential simple closed curves in $S$ and whose 
$n$--simplexes are determined by $n+1$ distinct vertices with pairwise 
disjoint representatives. The {\it distance}  between vertices is the 
number of edges in a minimal edge path joining them. 
Understanding this distance function is a daunting task -- it is hard to 
tell where one is headed as one moves away from a given vertex:

\begin{quest} %Q1. 
Given vertices $x, y$ of $C(S)$ is there an ``easy'' way to find a vertex $x_1$ with $x \cap x_1 = \emptyset $ and $d(x_1, y) = d(x,y) -1$?
\end{quest}

One answer is given by K Shackleton  \cite{S},
but it involves extending the curves to multicurves which satisfy an 
additional ``tightness" condition and requires a search space whose size 
grows very rapidly with the complexity of the problem. 
Also, the complexity is measured in terms of the intersection numbers of the 
multicurves  involved. 
This may not be the most natural measure and could be a distraction. 
I am hoping for something easier. The difficulty will be in making a proof.

If $d(x,y) > 2$, then  $x \cup y$ splits $S$ into contractible regions, 
each with an even number of edges, alternating between $x$ and $y$. 
We assume there are no bigons.  
Euler characteristic calculations yield that most of these regions are 
squares. When we look in $T$, a component of  $S$ split along $x$, we see 
families of parallel arcs from $y$ successively in the boundaries of these 
squares whose unions we call $y$--{\it stacks}. $T$ deforms to a graph 
$\Gamma$ with one vertex in each large region of $S- y$ and one edge 
crossing each $y$--stack (in $T$). Choose a maximal tree $\Delta$ in $\Gamma$. 
This determines a free basis for $\pi_1(T)$ whose elements are represented 
by simple closed curves each of which crosses exactly one of the $y$--stacks 
corresponding to an edge of $\Gamma - \Delta$ and crossing it once. 
We call any such  basis a $y$--determined free basis and its elements  
$y$--determined free generators. They are in many ways the most simple, 
relative to $y$,  curves in $S-x$. It is natural to ask:

\begin{quest}\label{Q2}  %% Q2. 
Is there some $y$--determined free generator $x_1$ for $\pi_1(T)$ 
with $d(x_1,y)$ $< d(x,y)$? 
Does every $y$--determined free basis contain such an element?
\end{quest}

Caution: there are examples in which  not  every $y$--determined free 
generator, $x_1$ satisfies $d(x_1,y) < d(x,y)$.  
If \fullref{Q2} can't be answered, then:

\begin{quest}  %% Q3. 
In terms of word length, in a $y$--determined free basis, how far must we 
look in order to find a simple closed curve $x_1 \subset T$ with 
$d(x_1,y) < d(x,y)$?
\end{quest}

%%%%%%%%%%%%%%%%%%%%%%%%%%%%%%%%%
%\title{Which manifolds have infinitely many Heegaard splittings of the
%same genus?}
%\newtheorem{conject}{Conjecture}

\section{Tao Li}

Let $M_1$ and $M_2$ be two manifolds with connected boundary 
$\partial M_1\cong\partial M_2\cong S$ and $\phi\co \partial M_1\to\partial M_2$ 
a homeomorphism.  If one glues $M_1$ to $M_2$ by identifying $x$ to $\phi(x)$ 
for each $x\in\partial M_1$, one obtains a closed manifold $M$.  
We say that $M$ is an amalgamation of $M_1$ and $M_2$.  

Let $\mathcal{D}_i$ ($i=1,2$) be the set of curves in $\partial M_i$ that 
bound essential disks in $M_i$.  We propose the following conjecture.  

\begin{conject}\label{Camal}
There is an essential curve $\mathcal{C}_i$ ($i=1,2$) in $\partial M_i$ such 
that if the distance between $\mathcal{D}_2\cup\mathcal{C}_2$ and 
$\phi(\mathcal{D}_1\cup\mathcal{C}_1)$ in the curve complex 
$\mathcal{C}(S)$ ($S=\partial M_2$) is sufficiently large, then either 
the minimal-genus Heegaard splitting of $M=M_1\cup_\phi M_2$ is an 
amalgamation or $S$ itself is a minimal-genus Heegaard surface in which 
case both $M_1$ and $M_2$ are handlebodies.
\end{conject} 

\fullref{Camal} is a generalization of two recent theorems.  
In the case that $M_1$ and $M_2$ are atoroidal and have incompressible 
boundaries, ie, $\mathcal{D}_1=\mathcal{D}_2=\emptyset$, the conjecture is 
proved by Souto \cite{So} and Li \cite{L11}.  Note that \cite{L11} also 
gives an algorithm to find $\mathcal{C}_1$ and $\mathcal{C}_2$.  

In the case that both $M_1$ and $M_2$ are handlebodies, $\mathcal{C}_1$ 
and $\mathcal{C}_2$ can be chosen to be empty and \fullref{Camal} 
follows from a recent theorem of Scharlemann and Tomova \cite{ST}.  
The theorem of Scharlemann and Tomova can be formulated as: if the distance 
between $\mathcal{D}_2$ and $\phi(\mathcal{D}_1)$ (ie, the Hempel distance) 
is large, then the genus of any other Heegaard splitting must be large unless 
it is a stabilized copy of $S$.

%%%%%%%%%%%%%%%%%%  
\section{Yair Minsky}

For a handlebody $H$ we have an inclusion of mapping class groups,
$MCG(H) < MCG(\boundary H)$. If $M = H_+ \union_S H_-$ is a Heegaard
splitting we denote $\Gamma_\pm = MCG(H_\pm)$ $< MCG(S)$, and moreover
let $\Gamma^0_\pm$ be the kernel of the map $MCG(H_\pm) \to$\break
${\rm Out}(\pi_1(H_\pm))$ (ie, $\Gamma^0_\pm$ is the group of mapping classes
of $H_\pm$ that are {\em homotopic} to the identity on $H_\pm$). 

\begin{quest}
When is $\Gamma_+\intersect \Gamma_-$ finite? 
\ldots\ finitely generated? 
\ldots\ finitely presented?
\end{quest}

\begin{quest}
When is $\langle\Gamma_+,\Gamma_-\rangle$ equal to the
amalgamation $\Gamma_+ *_{\Gamma_+\intersect \Gamma_-} \Gamma_-$? 
\end{quest}

\begin{quest}
When  is the map $\Gamma_+\intersect\Gamma_- \to MCG(M)$ injective? 
\end{quest}

When $M=S^3$ and ${\rm genus}(S)=2$, Akbas \cite{akbas} proved 
$\Gamma_+\intersect \Gamma_-$ is finitely presented.
(Finitely generated has a longer history starting with Goeritz \cite{Goe}
-- for details see Scharlemann \cite{scharlemann:automorphisms}.)
\medskip

Let $\Delta_\pm\subset \CC(S)$ be the set of (isotopy classes of)
simple curves in $S$ which bound disks in $H_\pm$. Let
$Z\subset\CC(S)$ be the simple curves in $S$ that are homotopic to the
identity in $M$. Note that $Z$ contains $\Delta_\pm$, and is invariant
under $\Gamma^0_\pm$. 

Namazi \cite{namazi:finite} showed if the distance of the splitting
(${\rm dist}_{\CC(S)}(\Delta_+,\Delta_-)$) is
sufficiently large then $\Gamma_+\intersect \Gamma_-$ is finite. Note
also the Geometrization Theorem  plus Hempel \cite{H}, 
plus Mostow rigidity plus Gabai--Meyerhoff--Thurston
\cite{gabai-meyerhoff-thurston}, implies that  
the image of $\Gamma_+\intersect \Gamma_-$ in $MCG(M)$ is finite when
the splitting distance is at least 3. 

\begin{quest}
When is $Z$ equal to the orbit 
$\langle\Gamma^0_+,\Gamma^0_-\rangle(\Delta_+\union\Delta_-)$?
\end{quest}

{\bf Remarks}\qua When $M=S^3$ or a connected sum of $S^2\times S^1$'s, with
the natural splitting, equality holds trivially. If $M$ is a lens
space $L_{p,q}$, with $p\ge 2$,  and $S$ is the torus, then they are not equal --
$Z$ is all of $\CC(S)=\Q\union\{\infty\}$, whereas the orbit of
$\Delta_+\union\Delta_- = \{\frac01,\frac{p}{q}\}$ under
$\langle\Gamma^0_+,\Gamma^0_-\rangle<SL(2,\Z)$ is strictly smaller than
$\Q\union\{\infty\}$. 
One might reasonably ask if there is equality when $M$
is hyperbolic, or if the splitting distance is
sufficiently large. 

%%%%%%%%%%%%%%%%%%  
\section{Yoav Moriah}
    
\begin{quest}
Give an example of a weakly reducible but non-stabilized non-minimal 
Heegaard splitting of a closed 3--manifold.  
\end{quest}
  
There are plenty of examples for manifolds with strongly irreducible 
Heegaard splittings of arbitrarily high genus. 
However there are no examples of manifolds (closed or with a single boundary 
component) with  a weakly
reducible but non-stabilized non-minimal Heegaard splitting.
  
\begin{quest}
Prove that if a manifold has strongly irreducible Heegaard splittings
of arbitrarily high genus then the Heegaard splittings are of the form
$H + nK$ where $H$ and $K$ are surfaces with $K$ perhaps not
connected. (See Moriah, Schleimer and Sedgwick \cite{MSS} and
Li~\cite{Li}.)  Or give a counterexample.
\end{quest}
  
\begin{quest}
There are examples by Kobayashi and Rieck (see below) of a 
$3$--manifold which has both a weakly reducible and strongly irreducible 
minimal genus Heegaard splitting.  These examples are
very   particular. Are there other such examples of a different nature?
\end{quest}
 
\begin{thm}[T Kobayashi and Y Rieck \cite{KR}]  
There are infinitely many $3$--manifolds which have both strongly irreducible 
and weakly  reducible Heegaard splittings of minimal genus.
\end{thm}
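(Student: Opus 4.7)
The plan is to construct an explicit infinite family of closed 3--manifolds, each admitting two distinguished genus--$g$ Heegaard splittings, one weakly reducible and one strongly irreducible, and then to certify that $g$ is the minimal Heegaard genus. A natural arena is a family of amalgamations along an incompressible surface, since this provides a built-in weakly reducible splitting (the amalgamated one), while the presence of a second topological structure on one side (such as a fibration or a pseudo-Anosov monodromy) can supply a competing strongly irreducible splitting of the same genus.

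First I would fix a base manifold $M_0 = X \cup_F Y$, where $F$ is an essential closed surface, $X$ has a Heegaard splitting with sufficiently complicated gluing to $F$, and $Y$ is chosen (for example as a fibered piece) so that it contributes a Heegaard surface that can be assembled with the one in $X$ either via amalgamation across $F$ (giving a weakly reducible splitting $\Sigma_{\mathrm{wr}}$) or via a transverse/horizontal construction relative to the fibration (giving a strongly irreducible splitting $\Sigma_{\mathrm{si}}$) of the same genus $g$. The genera agree by design. I would verify weak reducibility of $\Sigma_{\mathrm{wr}}$ directly from its amalgamated form, and strong irreducibility of $\Sigma_{\mathrm{si}}$ via a curve-complex distance estimate: a sufficiently high-distance monodromy forces the disk sets on the two sides to be far apart in $\mathcal{C}(\Sigma_{\mathrm{si}})$, hence any compressing disks meet.

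Next I would show $g(M_0) = g$. The upper bound is immediate from either of the two constructed splittings. For the matching lower bound, I would run a thin-position argument: sweep any candidate Heegaard surface $\Sigma'$ against the essential surface $F$, apply Scharlemann--Thompson / Schultens-style untelescoping, and use the complexity of $X$ and $Y$ (specifically bounds on their Heegaard and handlebody genera relative to $F$) to force $\mathrm{genus}(\Sigma') \ge g$. This step is where the real work lies.

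Finally, to obtain infinitely many examples I would introduce a parameter: replace the identification $\phi\co\partial X \to \partial Y$ by $\tau^n \circ \phi$, where $\tau$ is a Dehn twist along a simple closed curve in $F$ chosen so that both $\Sigma_{\mathrm{wr}}$ and $\Sigma_{\mathrm{si}}$ survive (the former trivially, the latter because the distance bound is preserved, and in fact grows with $|n|$). Call the resulting manifold $M_n$. The two splittings of $M_n$ retain their types and their common genus, and the lower-bound argument above applies uniformly, so the Heegaard genus is $g$ for all $n$. To see the $M_n$ are pairwise non-homeomorphic, I would extract a JSJ-invariant from the gluing data on $F$: distinct $n$ yield distinct gluing classes in $\mathrm{MCG}(F)$ modulo the finite subgroup that extends over $X \sqcup Y$. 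The main obstacle is the lower bound on $g(M_n)$: showing that no lower-genus splitting of the amalgamation can exist requires careful sweep-out analysis against $F$, and this is exactly where the ``very particular'' nature of the examples, as noted in the question above, is likely to enter.
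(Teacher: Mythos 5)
Your outline shares one genuine point of contact with the paper's argument: the weakly reducible splitting is obtained by amalgamation along an essential surface, and weak reducibility is read off from the amalgamated form. Beyond that, the proposal diverges and, more importantly, leaves the decisive steps unexecuted or rests them on a flawed mechanism. The paper's construction is completely concrete: $M$ is the union of the trefoil exterior $X$, a $2$--bridge link exterior $Y$, and the figure--$8$ exterior $Z$, glued along tori so that the cabling annulus boundary of $X$ matches one meridian of the link and the longitude of the figure--$8$ knot matches the other. The strongly irreducible splitting is not a ``horizontal'' surface certified by a curve--complex distance estimate; it is the explicit genus--$3$ surface built from the bridge sphere of $Y$, the cabling annulus of $X$, and two Seifert surfaces of $Z$, shown to be a Heegaard surface by assembling handlebodies along primitive meridional annuli, with strong irreducibility supplied by Kobayashi's Proposition 3.1 about exactly such splittings. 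The genus lower bound is not a thin--position sweep--out: it is an appeal to Kobayashi's classification of genus--$2$ toroidal manifolds, which excludes $M$ outright. And the infinite family comes from varying the $2$--bridge link $L(\alpha,\beta)$, not from composing the gluing with powers of a Dehn twist.

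Two of your steps are genuine gaps. First, you defer the genus lower bound to an unspecified sweep--out argument and concede it is ``where the real work lies''; without it there is no proof that the two splittings are of minimal genus, which is the whole content of the theorem. Second, your mechanism for strong irreducibility cannot work as stated: you ask for the Hempel distance of $\Sigma_{\mathrm{si}}$ to be large and to grow with $|n|$, but by Hartshorn's theorem the distance of any Heegaard splitting of a manifold containing an essential closed surface of genus $g$ is at most $2g$; for an essential torus this caps the distance at $2$. So the distance cannot grow, and ``sufficiently high--distance monodromy'' is incompatible with the very essential surface you use to build the weakly reducible splitting. (Distance exactly $2$ would still give strong irreducibility, but you would need a different argument to establish it, which is precisely the role Kobayashi's primitive--meridional--annulus criterion plays in the paper.) There is also no justification for the claim that the fibration--type splitting has the same genus as the amalgamation ``by design''; in the paper this equality is an Euler characteristic computation for a specific surface, not something that can be arranged in general.
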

 
\begin{proof}    Let $X = S^3 - N(K_1)$,
$Y = S^3 - N(K_2)$ and  $Z = S^3 - N(K_3)$, 
where $K_1 = T_{(2,3)}$ the trefoil knot,
$K_2 = L(\alpha, \beta)$  with $ \alpha $ even, 
is any $2$--bridge link which is not the 
Hopf link and $K_3 = K(2,5) $ is the figure $8$ knot.  
Let $\mu_1$ and $\mu_2$ be meridians 
of  the $2$--bridge link $L(\alpha, \beta)$,  
$\lambda$ be the longitude of $K_3$ and 
$\gamma$ be the boundary of the annulus in $X$.
  
Attach $X$ and $Z$ to $Y$ by gluing their tori boundaries 
so that $\gamma$ is mapped to $\mu_1$  and $\lambda$ to $\mu_2$.  
We obtain a closed $3$--manifold $M$ with incompressible  tori. 
As the genus two toroidal $3$--manifolds are classified by Kobayashi \cite{Ko}
this manifold cannot  have genus~2.  
The surface $S$ obtained from the bridge sphere  $\Sigma$ union the annulus in 
the trefoil complement and the two genus one Seifert  surfaces of the 
figure $8$ complement is a  closed surface of genus 
$g = 1 - \frac {\chi (S) } 2 =  1 -  ((-2) + (-2) + 0)/2 = 3$. Hence it is a 
minimal genus Heegaard splitting if it is a Heegaard surface.
  
Let $V_1, V_2$ be the two components of $S^3 - N(K_1)$. 
So on one side of $S$ we have at the first stage the solid torus $V_1$, 
say, glued to  the genus two handlebody $W_1$,
which is one of the two components $W_1,W_2$ of $Y- \Sigma$, along a primitive 
meridional annulus to obtain a genus two handlebody.  
This handlebody is glued in the second 
stage to the genus two handlebody which is a regular neighborhood 
of the Seifert surface along a primitive  meridional annulus. 
So we get a genus three handlebody  $U_1$.  
  
On the  other side of $S$ we  have $V_2$ glued to  the genus 
two handlebody $W_2$ along a primitive meridional annulus to obtain 
a genus two handlebody which is then glued again along a primitive meridional 
annulus to a genus two handlebody which is a regular   
neighborhood of the Seifert surface of the figure $8$ knot complement. 
Thus we get a genus three handlebody $U_2$  and $(U_1,U_2)$ is a 
genus three Heegaard splitting for $M$. 
  
In \cite[Proposition 3.1]{Ko} Kobayashi proves that a Heegaard
splitting of the form $(U_1,U_2)$ is always strongly irreducible if
the link $L(\alpha,\beta)$ is not trivial or a Hopf link.

Now consider the union along the torus boundary of $X$ and $Y$. This 
is a manifold with a minimal genus two Heegaard splitting and with one 
torus boundary component. Hence when this Heegaard splitting is amalgamated 
with the genus two Heegaard splitting of $Z$ we obtain a genus three Heegaard 
splitting for $M$ which is weakly reducible as it is obtained by amalgamation.
\end{proof}

\begin{conject}
Let $K_1$ and $K_2$ be { \it prime} knots in $S^3$ then 
$t(K_1 \# K_2) \leq t(K_1) + t(K_2)$ if and only if one of $K_i$ 
has a minimal genus Heegaard splitting
with primitive meridian. (See Moriah~\cite{Mo}.)
\end{conject}

Recently T Kobayashi and Y Rieck disproved the conjecture for 
knots which are not prime (see~\cite{KR1}). 
The conjecture is known by work of Morimoto for tunnel number 
one knots and for knots which are connected sums of two prime knots 
each of which is also $m$--small. (See~\cite{Mo} for more references.)

\begin{conject}
Given a knot  $K \subset S^3$ which is not  $\gamma$--primitive
then a boundary stabilization of a minimal genus Heegaard splitting 
of $E(K)$ is non-stabilized.
\end{conject}

\begin{conject}
All twisted torus knots of type $K = T(p,q,2,r)$ which are not 
$\mu$--primitive have a unique  
(minimal) genus  two Heegaard splitting. (See Moriah and Sedgwick~\cite{MS}.)
\end{conject}

\begin{conject}
What are the properties of meridional essential surfaces which ensure that the 
tunnel number degenerate? Can these surfaces be classified?
\end{conject}

\begin{quest} 		
Are there knots which are not  $K_1 = K^n(-2,3,-3,2)$  and $2$--bridge 
knots so that $t(K_1 \# K_2)  <   t(K_1) + t(K_2)$?  (See Morimoto~\cite{Mrr}.)
\end{quest}

\begin{quest} 	
Does rank equal genus for  hyperbolic $3$--manifolds?
\end{quest}

In~\cite{LM} Lustig and Moriah defined a condition on complete disks systems 
in a Heegaard splitting, called the {\it double rectangle condition}. 
It was  shown that if a manifold has a Heegaard splitting which has some 
complete disk system which satisfies this condition then there are only 
finitely many such disk systems with that property. However the 
double rectangle condition is clearly non-``generic". 
Is it possible to define some other condition which will be ``generic'' 
in some reasonable sense? The intuition is that if the Heegaard distance 
of the splitting is sufficiently high then some form of this might be possible.

%%%%%%%%%%%%%%%%%%  
\section{Richard Weidmann}

Let $(M;V,W)$ be a Heegaard splitting of genus $n$. 
Let $g_1,\ldots,g_n\in \pi_1(M)$ be the elements corresponding to a 
spine of $V$.

\begin{quest}
When is $\langle g_1,\ldots,g_{n-1}\rangle$ a free group of rank $n-1$?
\end{quest}

(This would be a kind of ``Freiheitsatz''.) 
For instance, what happens with the Casson--Gordon examples of strongly 
irreducible splittings of a fixed $M$ of arbitrarily high genus?

Note that the freeness holds for the examples exhibited by Namazi \cite{N}
and Namazi--Souto \cite{NS}.

\begin{quest}
Find (3--manifold) groups that have only finitely many irreducible Nielsen
equivalence classes of generating tuples.
\end{quest}

Here a $n$--tuple is called irreducible if it is not Nielsen equivalent to 
a tuple of type $(x_1,\ldots,x_{n-1},1)$.
Note that free groups and free Abelian groups have this property, in fact for 
those groups there is precisely one irreducible Nielsen equivalence class.
Note that for Heegaard splittings it has been shown by Tao Li that closed 
non-Haken 3--manifolds only have finitely many isotopy classes of irreducible 
Heegaard splittings so those groups might be potential examples.

\bibliographystyle{gtart}
\bibliography{link}

\end{document}